\documentclass{birkjour}
 \newtheorem{theorem}{Theorem}[section]
\newtheorem{lemma}[theorem]{Lemma}
\newtheorem{cor}[theorem]{Corollary}

\newtheorem{open question}{Open question}
\newtheorem{example}[theorem]{Example}
\newtheorem{program}{Program}
\theoremstyle{remark}
\newtheorem{remark}[theorem]{\bf{Remark}}
\newtheorem{note}[theorem]{Note}
 \numberwithin{equation}{section}
 \usepackage{enumerate}
\usepackage{lipsum}
\usepackage{float}
\usepackage{csquotes}
\usepackage{matlab-prettifier}
\usepackage[english]{babel} 
\usepackage{blindtext}
\usepackage{amsmath, amsthm, amscd, amsfonts, amssymb, graphicx, color}
\graphicspath{ {./images/} }
\usepackage[bookmarksnumbered, colorlinks, plainpages]{hyperref}
\hypersetup{colorlinks=true,linkcolor=red, anchorcolor=green, citecolor=cyan, urlcolor=red, filecolor=magenta, pdftoolbar=true}

\begin{document}

\title [On the numerical radius of a class of weighted shift operators ]{On the numerical radius of a class of weighted shift operators}

\author[Arobinda Ghosh]{Arobinda Ghosh}

\address{%
Department of Mathematics,\\ Jadavpur University, Kolkata 700032,\\ West Bengal, India
}
\email{7aghosh@gmail.com}

\author[Riddhick Birbonshi]{Riddhick Birbonshi}
\address{Department of Mathematics,\\ Jadavpur University, Kolkata 700032,\\ West Bengal, India}
\email{riddhick.math@gmail.com}

\author{Sarita Ojha}
\address{Department of Mathematics,\\
Indian Institute of Engineering Science and Technology, Shibpur,\\
Howrah-711103,\\
West Bengal, India}
\email{sarita.ojha89@gmail.com}


\subjclass{47A12, 47B37}
\keywords{Numerical Range, Numerical radius, Weighted shift operator}

\begin{abstract}
In this paper, we derive some bounds on numerical radius of the weighted shift operator $T$ with weights $(1,sq,q^2,tq^3,q^4,sq^5,q^6,tq^7,\ldots)$ where $s,t >0$ and $0<q<1$. Furthermore, we provide an entire function $F_T(z)$. The reciprocal of the minimal positive root of  $F_T(z)=0$ gives the numerical radius of $T$. These results generalize several previously known results on the numerical radius of weighted shift operators discussed in \cite{chakraborty2025numerical}.
\end{abstract}

\maketitle 
\footnotetext[1]{Corresponding author: Riddhick Birbonshi, Email: \texttt{riddhick.math@gmail.com}}
\section{Introduction and preliminaries}
Let $H$ be a complex separable Hilbert space with inner product $\langle \cdot, \cdot \rangle$ and norm $\|\cdot \|$. Let $B(H)$ denote the set of all bounded linear operators on $H$. For $T \in B(H)$, the numerical range $W(T)$ is the set $$W(T)=\{\langle Tx,x\rangle :x \in H, ||x||=1 \},$$
and the numerical radius $w(T)$ of $T$ is defined as $$w(T)=\sup\{|z|:z \in W(T)\}.$$
It is established that $W(T)$ is convex, bounded and nonempty subset of $\mathbb{C}$ (see \cite{gustafson1997numerical}). For an operator
$T\in B(H)$, the polar decomposition of $T$ is defined  as
$T=UP$, where 
$U$ is a partial isometry and 
$P$ is a positive operator (see \cite{halmos2012hilbert}). The operator $\Delta(T) = P^{1/2}UP^{1/2}$ is known as the Aluthge transformation of
$T$ (see \cite{aluthge1990p}).\\
Let $T$ be a weighted shift operator with bounded weights $(w_1,w_2,\ldots)$ on the Hilbert space $l^2 (\mathbb{N})$ represented  by the infinite matrix as,
\begin{eqnarray*}
T=T(w_1,w_2,\ldots) &=&  
\begin{bmatrix}
      0 & 0 & 0 & \cdots\\
      w_1 & 0 & 0 & \cdots\\
      0 & w_2 & 0 & \cdots\\
      \cdots & \cdots & \cdots & \cdots\\
      \cdots & \cdots & \cdots & \cdots
\end{bmatrix} . \end{eqnarray*} 
As a weighted shift operator
$T$ remains unitarily equivalent to $e^{i\theta}T$ for any real $\theta$, its numerical range is an open or closed circular disc centered at the origin (see \cite{shields1974weighted},\cite{stout1983numerical}). In particular, $W(T(1, 1, \ldots))$ is the open unit disc centered at the origin (see \cite{halmos2012hilbert}). \\
For the unitary operator 
\[
U=\operatorname{diag}(u_1,\, u_1u_2,\, u_1u_2u_3,\ldots),
\]
where $\{u_n:n=1,2,3,\ldots\}$ is a sequence of complex numbers satisfying $|u_n|=1$ for all $n$, the operator $UTU^{*}$ is a weighted shift with weights 
\[
(u_2w_1,\, u_3w_2,\, u_4w_3,\, \ldots,\, u_{n+1}w_n,\, \ldots).
\]
By choosing $u_1=1$, $u_{n+1}=\frac{\overline{w_n}}{|w_n|}$ when $w_n\neq 0$, and $u_{n+1}=1$ when $w_n=0$, we obtain 
\[
UTU^{*}=S,
\]
where $S = T(|w_1|, |w_2|, \ldots)$. Hence, without loss of generality, the weights of a weighted shift operator may be taken to be nonnegative.\\
The numerical radii of the following weighted shift operators with positive weights such as,
$$T(h,1,1, \ldots),\ T(a,b,a,b,\ldots),\ T(1,h,1,1,\ldots),$$ $$T(h,k,1,1,\ldots),\ T(w_1,w_2,\ldots,w_n,1,1,\ldots),\ T(h,k,a,b,a,b,\ldots),$$ $$T(w_1,w_2,\ldots ,w_{2n-1},b,a,b,a,\ldots),\ T(w_1,w_2,\ldots , w_{2n},a,b,a,b,\ldots)$$  
 have been calculated in \cite{berger1967mapping,ridge1976numerical,chien2013numerical,vandanjav2014numerical,undrakh2015numerical,chakraborty2022numerical,chakraborty20221numericall}, respectively.\\
In 1983, Stout \cite{stout1983numerical} has provided an algorithm to get the numerical radius of a Hilbert-Schmidt weighted shift operator $T(w_1,w_2, \ldots)$ by introducing the entire function 
\begin{equation}\label{F_T(z)}
    F_T(z)= \det \big(I-z \mbox{Re}(T(w_1,w_2, \ldots))\big) =1+\sum_{k=1} ^\infty \left(\frac{-1}{4}\right)^k c_kz^{2k},
\end{equation}
where Re$(T)=\frac{T+T^*}{2}$ and $c_k = \sum w_{i_1}^2w_{i_2}^2 \cdots w_{i_k}^2$, the sum being taken over 
$$1\leq i_1 < i_2 < \cdot \cdot \cdot <i_k< \infty ,\ i_2-i_1 \geq 2,\ \ldots ,\ i_k-i_{k-1} \geq 2.$$ 
Also, Stout \cite{stout1983numerical} has shown that the numerical radius $w(T(w_1,w_2,\ldots))=1/ \lambda$, where $\lambda$ is the smallest positive root of $F_T(z)=0$. \\
In $2009$, Chien and Nakazato \cite{chien2009numerical} have given an entire function 
$$ F_T (z) =1+\sum_{n=1} ^\infty \left(\frac{-1}{4}\right)^n\frac{ q^{2n(n-1)}}{(1-q^2)(1-q^4)\cdots(1-q^{2n})} z^{2n}, $$ 
for the weighted shift operator $T=T(1,q,q^2,\ldots)$ with $0<q<1$. Since computing the zeros of this entire function $F_T (z)$ is a difficult task, so they have provided  bounds on the numerical radius of the weighted shift operator $T$ as follows:
 \begin{eqnarray*}
        \frac{1}{4q^{3/2}}\sqrt{54-36q-2q^2-2\sqrt{(1-q)(9-q)^3}} &=& \sup_{0<\alpha<1} 
 \frac{\sqrt{\alpha}(1-\alpha)}{1-q\alpha} \\ &\leq& w\left(T(1,q,q^2, \ldots)\right)\leq \frac{1}{2-\sqrt{q}}.
    \end{eqnarray*}

Furthermore, Chakraborty and Ojha \cite{chakraborty2025numerical} generalized the above weighted shift operator as $T(1,sq,q^2, sq^3, \ldots)$ for $s>0, \ 0<q<1$ and they have proved the following result,
    \begin{eqnarray}\label{Bikshan_bound}
        \sup_{0<z<1} \frac{\sqrt{z}(1-z)(1+sqz)}{1-q^2z^2} &\leq& w\left(T(1,sq,q^2, sq^3, \ldots)\right)\nonumber\\ &&\leq \frac{\max\{1,sq\}}{2}+\frac{\sqrt{sq}}{2(2-\sqrt{q})}.
    \end{eqnarray}

The organization of the present article is as follows:\\
In Section \ref{2}, we focus on determining some bounds for the numerical radius of the weighted shift operator
\begin{equation}\label{Main_operator}
    T=T(1,sq,q^2,tq^3,q^4,sq^5,q^6,tq^7,\ldots), \ s,t>0,\ 0<q<1.
\end{equation}
In particular, for the lower bound of $w(T)$, we generalize the quantity $$\sup_{0<z<1} \frac{\sqrt{z}(1-z)(1+sqz)}{1-q^2z^2}$$ appearing in \eqref{Bikshan_bound} to $$\sup\limits_{0<z<1} \displaystyle\frac{(1-z)\sqrt z}{(1-q^4z^4)} \big((1+sqz)+q^2z^2(1+tqz)\big).$$
The computation of this supremum reduces to solving a polynomial equation of degree eight. Since an explicit solution of the equation can not be obtained for arbitrary values of $s$ and $t$, we pose this as an open question. In Section \ref{3}, we derive an entire function $F_T(z)$ corresponding to the weighted shift operator $T$, defined in  (\ref{Main_operator}). The exact numerical radius of $T$ is obtained from the least positive root of the equation $F_T(z)=0$. These results provide a generalization of earlier work established in \cite{chakraborty2025numerical}.

\section{Upper and lower bounds of the numerical radius of the operator\texorpdfstring{ $T(1,sq,q^2,tq^3,q^4,sq^5,q^6,tq^7,\ldots)$}{T(1,sq,q²,tq³,q⁴,sq⁵,q⁶,tq⁷,...)}}\label{2}
Here, we obtain some bounds on numerical radius of the weighted shift operator $T=T(1,sq,q^2,tq^3,q^4,sq^5,q^6,tq^7,\ldots)$ where $s>0,\ t>0$ and $0<q<1$.
\begin{lemma}\label{HS}
    Let $T=T(1,sq,q^2,tq^3,q^4,sq^5,q^6,tq^7,\ldots)$ be a weighted shift operator with $s>0,\ t>0$ and $0<q<1$. Then $T$ is a Hilbert-Schmidt operator.
\end{lemma}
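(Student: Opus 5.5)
To show that $T$ is Hilbert--Schmidt, we compute its Hilbert--Schmidt norm in the standard orthonormal basis $\{e_n\}$ of $l^2(\mathbb{N})$ and check that it is finite. Since $Te_n = w_n e_{n+1}$, we have $\sum_{n}\|Te_n\|^2=\sum_{n=1}^\infty w_n^2$. So it suffices to prove that the weight sequence is square summable.

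First we write the weights explicitly. For $n\ge 1$ we have $w_n = c_n q^{n-1}$, where $c_n\in\{1,s,t\}$. Specifically, $c_n=1$ for $n$ odd, $c_n=s$ for $n\equiv 2 \pmod 4$, and $c_n=t$ for $n\equiv 0\pmod 4$. Setting $M=\max\{1,s,t\}$, this gives $w_n^2\le M^2 q^{2(n-1)}$ for all $n$.

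Hence
\begin{equation*}
\sum_{n=1}^\infty w_n^2 \le M^2\sum_{n=1}^\infty q^{2(n-1)}=\frac{M^2}{1-q^2}<\infty,
\end{equation*}
because $0<q<1$. If an exact value is wanted, we group the terms in blocks of four:
\begin{equation*}
\sum_{n=1}^\infty w_n^2=\frac{1+s^2q^2+q^4+t^2q^6}{1-q^8}.
\end{equation*}

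The Hilbert--Schmidt norm of $T$ is therefore finite, so $T$ is Hilbert--Schmidt. In particular, $T$ is bounded and compact, so Stout's result \eqref{F_T(z)} applies to it. No step presents a real obstacle; the only care needed is correctly identifying the period-four pattern of the coefficients $c_n$.
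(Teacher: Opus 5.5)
Your proof is correct and follows the paper's argument: both compute $\|T\|_{HS}^2=\sum_n \|Te_n\|^2=\sum_n w_n^2$ in the standard basis and sum the resulting geometric series. Your closed form $\frac{1+s^2q^2+q^4+t^2q^6}{1-q^8}$ agrees with the paper's $\frac{1}{1-q^4}+\frac{q^2(s^2+t^2q^4)}{1-q^8}$, and your preliminary bound $M^2/(1-q^2)$ is an equally valid shortcut to finiteness.
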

    \begin{proof}
        Let $\{e_k:k\in\mathbb{N}\}$ be the standard orthonormal basis of $\ell^2(\mathbb{N})$. Then the Hilbert-Schmidt norm of $T$ is 
\begin{eqnarray*}
        ||T||_{HS} ^2 &=& \sum_{k=1} ^\infty ||Te_k||^2\\ 
        &=& 1+s^2q^2+q^4+t^2q^6+q^8+s^2q^{10}+q^{12}+t^2q^{14}+\cdots \\ 
        &=& (1+q^4+q^8+\cdots)+s^2q^2(1+q^8+q^{16}+\cdots)\\ &&+t^2q^6(1+q^8+q^{16}+\cdots)\\ 
        &=& \frac{1}{1-q^4}+ \frac{q^2(s^2+t^2q^4)}{1-q^8} < \infty .
\end{eqnarray*}
        Hence, $T$ is a Hilbert-Schmidt operator.
    \end{proof}
From Lemma \ref{HS}, the operator $T=T(1,sq,q^2,tq^3,q^4,sq^5,q^6,tq^7,\ldots)$ is a compact operator. Hence $W_e(T)=\{0\} \subset W(T)$ and therefore by \cite{lancaster1975boundary}, $W(T)$ is closed.

\begin{theorem}\label{ag}
    Let $T=T(1,sq,q^2,tq^3,q^4,sq^5,q^6,tq^7,\ldots)$ be a weighted shift operator with $s>0,\ t>0$ and $0<q<1$. Then \begin{enumerate}[\upshape (a)]
        \item $w(T)\geq \sup\limits_{0<z<1} \displaystyle\frac{(1-z)\sqrt z}{(1-q^4z^4)} \big((1+sqz)+q^2z^2(1+tqz)\big)$
        \item $w(T)\leq \frac{1}{2}\max\{1,sq,tq^3\} + \displaystyle\frac{\sqrt{sq}}{2}w\big( 
  T(1,q,\sqrt{\frac{t}{s}}q^2,\sqrt{\frac{t}{s}}q^3,q^4,q^5,\ldots)\big)$.
    \end{enumerate}
\end{theorem}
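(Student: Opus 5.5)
For part (a) I will use a concrete test vector. Fix $0<z<1$ and set $x=(x_1,x_2,\ldots)$ with $x_n=z^{(n-1)/2}$. Then $\|x\|^2=\sum_{n\ge1}z^{n-1}=\frac{1}{1-z}$, and $\langle Tx,x\rangle=\sum_{n\ge1} w_n x_n x_{n+1}=\sum_{n\ge1} w_n z^{n-1/2}$. All terms are nonnegative, so $w(T)\ge \langle Tx,x\rangle/\|x\|^2$. I will group the weights in period-four blocks, writing $w_{4k+1}=q^{4k}$, $w_{4k+2}=sq^{4k+1}$, $w_{4k+3}=q^{4k+2}$ and $w_{4k+4}=tq^{4k+3}$. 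This gives
\begin{align*}
\sum_n w_n z^{n-1/2}&=\sqrt z\sum_{k\ge0}(q z)^{4k}\big(1+sqz+q^2z^2+tq^3z^3\big)\\
&=\frac{\sqrt z\,\big((1+sqz)+q^2z^2(1+tqz)\big)}{1-q^4z^4}.
\end{align*}
Multiplying by $1-z$ and taking the supremum over $z\in(0,1)$ yields (a). This step is routine.

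For part (b) I will split $T$ into a "shift-by-one block" part and a remainder, imitating the approach of \cite{chakraborty2025numerical}. Write $T=A+B$, where $A$ keeps the weights in odd positions and $B$ keeps those in even positions:
\[
A=T(1,0,q^2,0,q^4,0,\ldots),\qquad B=T(0,sq,0,tq^3,0,sq^5,0,tq^7,\ldots).
\]
Then $w(T)\le w(A)+w(B)$. Both $A$ and $B$ are direct sums of $2\times2$ nilpotent blocks $\begin{bmatrix}0&0\\ a&0\end{bmatrix}$, whose numerical radius is $|a|/2$. This only gives $w(T)\le \frac12(1+\max\{sq,tq^3\})$, which is not the stated bound, so the split must be arranged differently. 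The stated form $\frac12\max\{1,sq,tq^3\}+\frac{\sqrt{sq}}2\,w(\cdot)$ suggests a different decomposition. In one piece, each weight is replaced by a factor $c_n$. In the other piece, each weight becomes $w_n/c_n$, with the geometric mean $\sqrt{w_n\cdot(\text{something})}$ producing the $\sqrt{sq}$.

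Concretely, I will use the Aluthge-type factorization idea. Write $w_n=a_n b_n$ with $a_n=\sqrt{w_n\,\mu_n}$ and $b_n=\sqrt{w_n/\mu_n}$ chosen so that one factor sequence is essentially constant. Then I will use the inequality
\[
w(T)\le \tfrac12\|T\|+\tfrac12 w(\Delta(T))\quad\text{or}\quad w(T)\le \tfrac12\big(\|T\|+\|T^2\|^{1/2}\big).
\]
Both are known results: the first is due to Yamazaki, and Yamazaki also proved $w(T)\le \frac12(\|T\|+w(\Delta(T)))$. For a weighted shift, $\|T\|=\sup w_n=\max\{1,sq,tq^3\}$, which matches the first term. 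The Aluthge transform of $T(w_1,w_2,\ldots)$ is the weighted shift with weights $\sqrt{w_1w_2},\sqrt{w_2w_3},\ldots$. Here these are
\[
\sqrt{sq},\ \sqrt{s}\,q^{3/2},\ \sqrt{t}\,q^{5/2},\ \sqrt t\, q^{7/2},\ \sqrt{s}\,q^{9/2},\ldots,
\]
that is, $\sqrt{sq}\,\big(1,q,\sqrt{t/s}\,q^2,\sqrt{t/s}\,q^3,q^4,q^5,\ldots\big)$. This is exactly the operator in (b). So (b) follows from Yamazaki's inequality $w(T)\le\frac12\|T\|+\frac12 w(\Delta(T))$ together with the homogeneity $w(cS)=cw(S)$.

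The main work is to verify the Aluthge weights carefully across the period-four pattern, checking all four residues mod $4$ and confirming that the pattern of the new weight sequence repeats correctly. I also need to confirm that the polar decomposition of a weighted shift with positive weights is $U=$ the unweighted shift and $P=\mathrm{diag}(w_n)$. That gives $P^{1/2}UP^{1/2}=T(\sqrt{w_1w_2},\sqrt{w_2w_3},\ldots)$, with the indexing checked against the lower-triangular convention used in the paper. If Yamazaki's result is not admissible as a citation, the fallback is to prove it via $w(T)=\sup_\theta\|\mathrm{Re}(e^{i\theta}T)\|$, which I expect to be the only delicate point.
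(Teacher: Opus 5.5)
Your proposal is correct and follows the paper's proof: the geometric test vector $x_n=z^{(n-1)/2}$ (the paper normalizes it by $\sqrt{1-z}$) with the period-four grouping gives (a). For (b), Yamazaki's inequality $w(T)\le \frac12\|T\|+\frac12 w(\Delta(T))$, the polar decomposition with $U$ the unilateral shift and $P=\mathrm{diag}(w_n)$, the Aluthge weights $\sqrt{w_nw_{n+1}}$, and $\|T\|=\max\{1,sq,tq^3\}$ are exactly what the paper uses; your abandoned $A+B$ splitting and the mention of the $\|T^2\|^{1/2}$ inequality play no role and can be dropped.
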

    \begin{proof}
    \begin{enumerate}
    \item[(a)] 
Consider the unit vector $x=\{x_n \} \in l^2 (\mathbb{N})$, where $x_n= \sqrt{ (1-z)} z^\frac{(n-1)}{2}$, $0<z<1.$  Then
         \begin{eqnarray*}
     \langle Tx,x \rangle
        &=& x_1x_2 +sqx_2x_3 +q^2x_3x_4 +tq^3x_4x_5+\cdots \\
         &=& (1-z)z^{1/2} +sq(1-z) z^{3/2}+ q^2(1-z)z^{5/2} +tq^3(1-z)z^{7/2}+\cdots\\
         &=& (1-z)z^{1/2} \big \{(1+sqz)(1+q^4z^4+q^8z^8+\cdots)\\
         &&+(1+tqz)(q^2z^2+q^6z^6+\cdots) \big\} \\
         &=& \frac{(1-z)\sqrt z}{(1-q^4z^4)}\left( (1+sqz)+q^2z^2(1+tqz)\right) >0.
     \end{eqnarray*}
Hence the result follows. 
 \item[(b)] We know from \cite{yamazaki2007upper},
 $$w(T)\leq \frac{\|T\|}{2}+\frac{w (\Delta(T))}{2}.$$
Now the polar decomposition of $T$ is $T=UP$, where  $$U=T(1,1,1,\ldots)\ \mbox{ and } \ P= \sqrt{T^*T}=diag(1,sq,q^2,tq^3,\ldots)$$
and therefore
\begin{eqnarray*}
\Delta(T) &=& P^{1/2}UP^{1/2}\\ &=& \sqrt{sq} \  
  T(1,q,\sqrt{\frac{t}{s}}q^2,\sqrt{\frac{t}{s}}q^3,q^4,q^5,\sqrt{\frac{t}{s}}q^6,\sqrt{\frac{t}{s}}q^7,q^8,q^9,\ldots).
\end{eqnarray*}
Also $||T|| =\max \{1,sq,tq^3 \} $.
Hence we get the required upper bound.
\end{enumerate}
\end{proof}
Again using 
\begin{equation*}
    \left\|T(1,q,\sqrt{\frac{t}{s}}q^2,\sqrt{\frac{t}{s}}q^3,q^4,q^5,\sqrt{\frac{t}{s}}q^6,\sqrt{\frac{t}{s}}q^7,q^8,q^9,\ldots)\right\|=\max \left\{1, \sqrt{\frac{t}{s}}q^2\right\},
\end{equation*}
 we have the following corollary.

\begin{cor}\label{cor}
    Let $T=T(1,sq,q^2,tq^3,q^4,sq^5,q^6,tq^7,\ldots)$ be a weighted shift operator with $s>0, \ t>0$ and $0<q<1$. Then 
    $$w(T)\leq \frac{1}{2}\max\{1,sq,tq^3\}+\frac{1}{2}\sqrt{sq} \max \left\{1, \sqrt{\frac{t}{s}}q^2\right\}.$$
\end{cor}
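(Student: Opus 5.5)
The corollary follows by combining Theorem \ref{ag}(b) with the elementary inequality $w(A)\leq \|A\|$, valid for every $A\in B(H)$ because $|\langle Ax,x\rangle|\leq \|Ax\|\,\|x\|\leq \|A\|$ for unit vectors $x$.

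Write $S=T(1,q,\sqrt{t/s}\,q^2,\sqrt{t/s}\,q^3,q^4,q^5,\ldots)$ for the weighted shift appearing in Theorem \ref{ag}(b). Then
\[
w(T)\leq \frac{1}{2}\max\{1,sq,tq^3\}+\frac{\sqrt{sq}}{2}\,w(S),
\]
and it suffices to show $w(S)\leq \|S\|=\max\{1,\sqrt{t/s}\,q^2\}$.

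The only thing to verify is the norm of $S$. For a weighted shift with nonnegative weights $(w_n)$, one has $S^*S=\operatorname{diag}(w_1^2,w_2^2,\ldots)$, so $\|S\|=\sup_n w_n$.

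The weights of $S$ fall into two subsequences:
\begin{enumerate}[\upshape (i)]
\item the unscaled weights $q^{4k},q^{4k+1}$, whose supremum is $q^0=1$;
\item the scaled weights $\sqrt{t/s}\,q^{4k+2},\sqrt{t/s}\,q^{4k+3}$, whose supremum is $\sqrt{t/s}\,q^2$.
\end{enumerate}
Both subsequences are decreasing since $0<q<1$, so each supremum is attained at $k=0$. Hence $\|S\|=\max\{1,\sqrt{t/s}\,q^2\}$.

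Substituting this into the displayed inequality gives exactly the bound in the corollary. There is no genuine obstacle; the only care needed is confirming the supremum of the weights, which the monotonicity of powers of $q$ settles.
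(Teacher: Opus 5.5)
Your proposal is correct and matches the paper's argument: the paper also obtains the corollary from Theorem~\ref{ag}(b) by bounding $w\big(T(1,q,\sqrt{t/s}\,q^2,\sqrt{t/s}\,q^3,q^4,q^5,\ldots)\big)$ by the norm of that shift, namely the supremum of its weights, $\max\{1,\sqrt{t/s}\,q^2\}$. Your split of the weights into the unscaled and $\sqrt{t/s}$-scaled subsequences supplies the detail behind the norm value, which the paper states without justification.
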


\begin{remark}\label{remark}
Several notable special cases from our operator are as follows:
\begin{enumerate}
    \item For $s=t=1$, our operator reduces to $T(1,q,q^2, \ldots)$ and Theorem \ref{ag} coincides with Theorem 2.1 of \cite{chien2009numerical}.
    \item For $s=t$, our operator becomes the operator $T(1,sq,q^2, sq^3, \ldots)$ and Theorem \ref{ag} correspondingly reduces to Theorem 2.3 of \cite{chakraborty2025numerical}.
\end{enumerate}
\end{remark}

\noindent To calculate the lower bound, we need to determine
$$\sup_{0<z<1} \frac{(1-z)z^{1/2}\bigg(1+sqz+ q^2z^2(1+tqz)\bigg)}{(1-q^4z^4)}.$$
Consider the function 
\begin{equation}\label{f(z)}
    f(z)=\frac{(1-z)z^{1/2}\bigg(1+sqz+ q^2z^2(1+tqz)\bigg)}{(1-q^4z^4)}
\end{equation}
defined on $0 \leq z \leq 1.$

Since $f$ is a nonnegative real-valued continuous function on $[0,1]$ and $f(0)=f(1)=0$, so $f$ attains its maximum in (0,1). To find the extreme points of the function $f(z)$ in $(0,1)$, we first obtain
         \begin{eqnarray*}
             f'(z)&=&\frac{1}{2\sqrt{z}(1-q^4z^4)^2}\bigg(tq^7z^8 +q^6(tq-1)z^7 +3q^5(q-s)z^6+ 5q^4(sq-1)z^5 \\&&+q^3(7q-9t)z^4+7q^2(tq-1)z^3
         +5q(q-s)z^2+3(sq-1)z +1\bigg).
         \end{eqnarray*}
So, the extreme points of $f(z)$ in $(0,1)$ are the roots of $g(z)=0$ that lies in $(0,1)$ where 
\begin{eqnarray}\label{g(z)}
    g(z) &=& tq^7z^8 +q^6(tq-1)z^7 +3q^5(q-s)z^6+ 
  5q^4(sq-1)z^5 +q^3(7q-9t)z^4\nonumber \\ && +7q^2(tq-1)z^3+5q(q-s)z^2+3(sq-1)z +1.
\end{eqnarray}
It is easy to observe that
\begin{eqnarray*}
    g(0) &=& 1>0,\\
     \mbox{and }g(1) &=& 2(q^4-1)(tq^3+q^2+sq+1)<0.
\end{eqnarray*}
So $g(z)=0$ has a root in $(0,1)$.

\noindent To solve the equation $g(z)=0$ for $0<q<1$, we divide the cases as follows:
         \begin{enumerate}
            \item[] Case 1: $s,t \in (q , \frac{1}{q}) $ 
            \item[] Case 2: at least one of $s, t \in (0,q]$
            \item[] Case 3: at least one of $s,t\in [\frac{1}{q}, \infty)$.
         \end{enumerate}
\noindent To prove the results, we start with Case 1.
\begin{theorem}\label{solved}
     Let $s,t\in (q, \frac{1}{q})$ , then $g(z)=0$ in \eqref{g(z)} has a unique root in $(0,1).$
\end{theorem}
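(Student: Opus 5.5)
The plan is to count positive roots of $g$ with Descartes' rule of signs and then use the sign information already recorded for $g(0)$, $g(1)$ and the behaviour of $g$ at infinity.

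\textbf{Step 1: signs of the coefficients.} Writing $g$ in decreasing powers of $z$ as in \eqref{g(z)}, I will check each coefficient under the hypothesis $q<s<\frac{1}{q}$, $q<t<\frac1q$.
\begin{itemize}
\item The leading coefficient $tq^7$ is positive.
\item The coefficients $q^6(tq-1)$ and $7q^2(tq-1)$ are negative, since $tq<1$.
\item The coefficients $3q^5(q-s)$ and $5q(q-s)$ are negative, since $s>q$.
\item The coefficients $5q^4(sq-1)$ and $3(sq-1)$ are negative, since $sq<1$.
\item The middle coefficient $q^3(7q-9t)$ is negative, because $t>q$ gives $7q-9t<7q-9q<0$.
\item The constant term is $1>0$.
\end{itemize}
So the sign pattern is $+,-,-,-,-,-,-,-,+$, which has exactly two sign changes. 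By Descartes' rule, $g$ has either $0$ or $2$ positive roots, counted with multiplicity.

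\textbf{Step 2: locating two positive roots.} From the computation preceding the statement, $g(0)=1>0$ and $g(1)=2(q^4-1)(tq^3+q^2+sq+1)<0$. By the intermediate value theorem there is a root in $(0,1)$. Since the leading coefficient $tq^7$ is positive, $g(z)\to+\infty$ as $z\to\infty$, and $g(1)<0$ then gives a second root in $(1,\infty)$. Hence $g$ has at least two distinct positive roots.

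\textbf{Step 3: uniqueness.} By Step 1, $g$ has at most two positive roots with multiplicity, so there are exactly two and both are simple. Exactly one of them lies in $(0,1)$ (the other lies in $(1,\infty)$), which proves the theorem.

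The only delicate point is the sign of the $z^4$ coefficient $q^3(7q-9t)$, where the lower bound $t>q$ is genuinely needed. This is precisely why Case 1 excludes $t\le q$. The remaining coefficient signs follow immediately from $sq<1$, $tq<1$ and $s>q$.
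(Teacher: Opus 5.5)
Your proof is correct and complete. It differs from the paper's only in how a second root inside $(0,1)$ is ruled out.

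Both proofs begin with the same Descartes count for $g$ (pattern $+,-,\dots,-,+$, hence $0$ or $2$ positive roots) and the sign change $g(0)>0>g(1)$. The paper then supposes two roots in $(0,1)$ and uses Rolle's theorem to get a zero of $g'$ there. It excludes that zero with a second Descartes count on $g'$ (pattern $+,-,\dots,-$, so exactly one positive root), together with $g'(0)=3(sq-1)<0$ and $g'(1)=(q^4-1)(15tq^3+11q^2+7sq+3)<0$.

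You instead use the positive leading coefficient $tq^7$ to push the second positive root into $(1,\infty)$. This needs no differentiation and also shows that the root in $(0,1)$ is simple. The paper's detour buys a by-product: $g'$ has no zero in $[0,1]$, so $g$ is strictly decreasing there. That is exactly the monotonicity the later examples show can fail outside Case 1. Neither extra step is strictly necessary, since $g(0)>0>g(1)$ already forces an odd number of roots in $(0,1)$ counted with multiplicity, and Descartes caps the total at two.

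One minor quibble with your closing remark: the $z^4$ coefficient $q^3(7q-9t)$ is already negative for $t>7q/9$. So $t>q$ is sufficient there but not genuinely necessary, and it is not ``precisely why'' Case 1 excludes $t\le q$.
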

\begin{proof}
In this case, $s,t>q$ and $sq-1, tq-1<0$. Therefore by Descartes' rule of signs, $g(z)=0$ has either 0 or 2 positive roots. As $g(0)g(1)<0$, let us assume $g(z)=0$ has two distinct roots in $(0,1)$. So there exists $c\in (0,1)$ such that $g'(c)=0$. Now
\begin{eqnarray*}
   g'(z) &=& 8tq^7z^7+7q^6(tq-1)z^6+18q^5(q-s)z^5+25q^4(sq-1)z^4\\
        && +4q^3(7q-9t)z^3+21q^2(tq-1)z^2+10q(q-s)z+3(sq-1). 
\end{eqnarray*}
Since $g'(0)=3(sq-1)<0$ and $g'(1)=(q^4-1)(15tq^3+11q^2+7sq+3)<0$, therefore, $g'(z)=0$ has either no roots or an even number of  roots in $(0,1)$. Also, by Descartes' rule of signs, $g'(z)=0$ has only one positive root.\\   
Hence, $g'(z)=0$ has no roots in $(0,1)$, which is a contradiction. So, $g(z)=0$ has only one root in $(0,1)$. 
\end{proof}

Here, we present two examples illustrating cases where the above result holds. In these examples, the numerical radius achieves a better lower bound than the classical $\frac{\|T\|}{2}$, 
and the upper bound is sharper than the traditional norm bound $\|T\|$.
\begin{example}
         Consider $q=\frac{1}{5}$, $s=4.8, \ t=1$. Then $s,t\in (q, \frac{1}{q})$ with $s>t$ and
         \begin{eqnarray*}
             g(z) &=& 0.0000128z^8
         -0.0000512z^7-0.004416z^6-0.00032z^5\\ &&-0.0608z^4-0.224z^3-4.6z^2-0.12z+1.
         \end{eqnarray*}
         The approximate roots of $g(z)=0$ are
         \begin{align*}
            & -17.0917,\  -0.4846,\ 0.4481,\ 21.0930, \ -3.4438 \pm 4.1343i 
         \\ \mbox{and } & 3.4615 \pm 4.7422i.
         \end{align*}
    So, the smallest positive root  in the interval $(0,1)$ is $\alpha = 0.4481$, and $f(\alpha)= 0.5316$, where $f(z)$ is defined in (\ref{f(z)}). Hence $$\frac{||T||}{2}=0.5<0.5316\leq w(T).$$
Also from Corollary \ref{cor}, 
\begin{align*}
    w(T)\leq \frac{1}{2}\max\{1,sq,tq^3\}+\frac{1}{2}\sqrt{sq} \max \{1, \sqrt{\frac{t}{s}}q^2\}=0.9899<1=||T||.
\end{align*}\end{example}

\begin{example}
          Consider $q=0.7$, $s=q^{1/2}, t=q^{1/4}$. Then $s,t\in (q,\frac{1}{q})$ with $s<t$ and
          \begin{eqnarray*}
              g(z)&=&0.075z^8-0.0423z^7-0.0685z^6-0.497z^5-1.14z^4\\&&-1.235z^3-0.476z^2-1.244z+1.
          \end{eqnarray*}
    The approximate roots of $g(z)=0$ are 
    \begin{align*}
        &0.5049,\ 2.7856,\ 0.1177 \pm  1.1638i,\ -0.1224 \pm 1.7352i\\ \mbox{and}& -1.3585 \pm 0.6664i.
    \end{align*}
    So, the smallest positive root in the interval $(0,1)$ is $\alpha = 0.5049$, and $f(\alpha)= 0.5221$, where $f(z)$ is defined in (\ref{f(z)}). Hence $$\frac{||T||}{2}=0.5<0.5221\leq w(T).$$
    Also from Corollary \ref{cor}, 
\begin{align*}
    w(T)\leq \frac{1}{2}\max\{1,sq,tq^3\}+\frac{1}{2}\sqrt{sq} \max \{1, \sqrt{\frac{t}{s}}q^2\}=0.8826<1=||T||.
\end{align*}
\end{example}

Recently, Chakraborty and Ojha \cite{chakraborty2025numerical} investigated the operator 
\begin{equation}\label{T_1}
    T_1=T(1,sq,q^2,sq^3,\ldots) \mbox{ for }s>0, \ 0<q<1
\end{equation} which is a special case of our operator \eqref{Main_operator} for $t=s$ (see Remark \ref{remark}). Taking $s = t$, the expression $g(z)$ in \eqref{g(z)} shows that
\begin{eqnarray*}
    g(z) &=& sq^7z^8 +q^6(sq-1)z^7 +3q^5(q-s)z^6+ 
  5q^4(sq-1)z^5 +q^3(7q-9s)z^4\nonumber \\ && +7q^2(sq-1)z^3
         +5q(q-s)z^2+3(sq-1)z +1 \\&=& (q^2z^2+1)^2\bigg(sq^3z^4+q^2(sq-1)z^3+(3q^2-5sq)z^2+3(sq-1)z+1\bigg)\\&=&(q^2z^2+1)^2 h(z)
\end{eqnarray*}
where
\begin{equation}\label{h(z)}
    h(z)=sq^3z^4+q^2(sq-1)z^3+(3q^2-5sq)z^2+3(sq-1)z+1
\end{equation}
is the polynomial given in equation (7) of \cite{chakraborty2025numerical}. Consequently, the real roots of $g(z)$ and $h(z)$ coincide when $s = t$. To obtain the lower bound of $w(T_1)$, the authors have considered the function $h(z)$ as defined in \eqref{h(z)}. They have shown that $h(z)$ has a unique root in the interval $\left(0, 1\right)$. Particularly, for the case $s \in (0, q)$, they used the monotone decreasing behavior of $h(z)$ on $\left(\tfrac{1}{3}, 1\right)$ to establish the existence of a unique root in $\left(\tfrac{1}{3}, 1\right) \subset (0,1)$.

In contrast, for our operator \eqref{Main_operator}, the corresponding function $g(z)$ exhibits both increasing and decreasing behavior on $\left(\tfrac{1}{3}, 1\right)$ whenever at least one of $s,t \in (0,q]$ or at least one of $s,t \in \left[\tfrac{1}{q}, \infty\right)$. To illustrate this claim, we provide the following examples. 
 \begin{example}
     Consider $q=0.995$, $s=0.01$, $t=0.966$, i.e., $s,t \in (0,q]$. The curve of 
     \begin{eqnarray*}
         g(z) &=& 0.932z^8 -0.0376z^7+2.881z^6-4.8519z^5-1.703z^4-0.269z^3\\
         &&+4.9003z^2-2.97z+1
     \end{eqnarray*}
      exhibits both increasing and decreasing behavior in the interval $(\frac{1}{3}, 1)$ as in Figure \ref{fig:mesh1}. However, $g(z)=0$  has a unique root in $(\frac{1}{3},1)$, which is approximately $0.9160$.   
 \begin{figure}[H]
     \centering
 \includegraphics[width=0.4\textwidth]{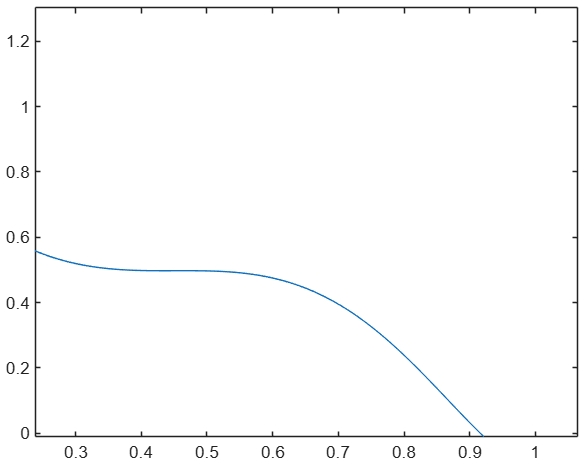}
     \caption{}
     \label{fig:mesh1}
 \end{figure}
 \end{example}

\begin{example}
    Consider $q=0.99$, $s=1.2$, $t=3$, i.e.,  $s,t \in [\frac{1}{q},\infty)$. The curve of 
    \begin{eqnarray*}
   g(z) &=& 2.796196z^8+1.854715894z^7-0.599123z^6+0.90296z^5\\&&-19.4739z^4+13.515579z^3-1.0395z^2+0.564z+1 
    \end{eqnarray*}
    exhibits both increasing and decreasing behavior in $(\frac{1}{3}, 1)$ as in Figure \ref{fig:mesh2}. However, $g(z)=0$  has a unique root in $(\frac{1}{3},1)$, which is approximately $0.9070$.
\begin{figure}[h!]
    \centering
\includegraphics[width=0.4\textwidth]{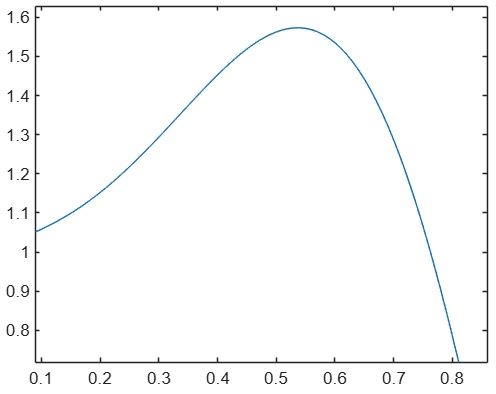}
    \caption{}
    \label{fig:mesh2}
\end{figure}
\end{example}

\begin{example}
     Consider $q=0.977$, $s=0.725$, $t=1.53$, i.e., $s\in (0,q]$ and $t\in [\frac{1}{q},\infty)$. The curve of\begin{eqnarray*}
g(z)&=&1.295074z^8+0.429021z^7+0.671577z^6-1.326708z^5\\&&-6.460z^4+3.305z^3+1.231z^2-0.875025z+1 \end{eqnarray*}  
exhibits both increasing and decreasing behavior in $(\frac{1}{3}, 1)$ as in Figure \ref{fig:mesh3}. However, $g(z)=0$  has a unique root in $(\frac{1}{3},1)$, which is approximately $0.8411$.
\begin{figure}[h!]
    \centering
\includegraphics[width=0.4\textwidth]{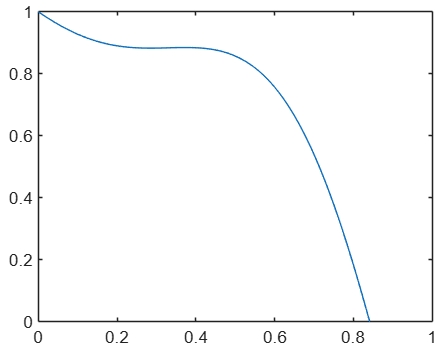}
    \caption{}
    \label{fig:mesh3}
\end{figure}
\end{example}

It is also observed that
\begin{align*}
    g\left(\frac{1}{3}\right) = \frac{4}{9}sq+\frac{8}{27}q^2+\frac{4}{27}tq^3+\frac{16}{3^5}q^4+\frac{4}{3^5}sq^5+\frac{8}{3^7}q^6+\frac{4}{3^8}tq^7>0.
\end{align*}
Thus $g(z)=0$ in \eqref{g(z)} has a root in $(\frac{1}{3},1) \subset (0,1)$.\\
The polynomial $g(z)$ can now be rewritten as 
\begin{eqnarray}\label{G(z)}
   g(z)&=&1-3z+qzm(z,s,t,q),
\end{eqnarray}
where \begin{eqnarray}\label{m_z}
    m(z,s,t,q) &=&-7qz^2+ 3s+5qz-5sz+7tq^2z^2 +7q^3z^3-9tq^2z^3+5sq^4z^4\nonumber \\
 &&-5q^3z^4+3q^5z ^5-3sq^4z^5+tq^6z ^6-q^5z^6+tq^6z^7. 
 \end{eqnarray}
To prove the next result, the following lemma is essential.

\begin{lemma}\label{MZ}
Let $z\in(0,\frac{1}{3})$. Then $m(z,s,t,q)$ defined in \eqref{m_z} is positive for all $0<z<\frac{1}{3}$.
\begin{proof}
    From \eqref{m_z}, \begin{eqnarray*}
        m(z,s,t,q)&=& -7qz^2+5qz+7q^3z^3-5q^3z^4+3q^5z ^5-q^5z^6\nonumber\\&&+s(3-5z+5q^4z^4-3q^4z^5)+t(7q^2z^2-9q^2z^3+q^6z^6+q^6z^7)\nonumber\\
 &=& R_1(q,z)+sR_2(q,z)+tR_3(q,z),
    \end{eqnarray*}
    where \begin{eqnarray*}
        R_1(q,z)&=&-7qz^2+5qz+7q^3z^3-5q^3z^4+3q^5z ^5-q^5z^6,\\
            R_2(q,z)&=& 3-5z+5q^4z^4-3q^4z^5\\
         \mbox{and }  \ R_3(q,z)&=& 7q^2z^2-9q^2z^3+q^6z^6+q^6z^7.
        \end{eqnarray*}
    For $0<z<\frac{1}{3}$, 
    \begin{eqnarray*}
        R_1(q,z)&=&qz(5-7z)+q^3z^3(7-5z)+q^5z ^5(3-z)\\&>& \frac{8qz}{3}+\frac{16q^3z^3}{3}+\frac{8q^5z^5}{3} > 0.
    \end{eqnarray*} 
    Similarly, 
    \begin{eqnarray*}
            R_2(q,z) &=& 3-5z+q^4z^4(5-3z)> \frac{4}{3}+4q^4z^4 > 0\\
        \mbox{and } 
            R_3(q,z) &=& 7q^2z^2-9q^2z^3+q^6z^6+q^6z^7 > q^2z^2(7-9z)> 4q^2z^2>0.
        \end{eqnarray*} Hence the result follows.
\end{proof}
\begin{theorem}
 The equation $g(z)=0$ has no root in $(0,\frac{1}{3})$, where $g(z)$ is defined in \eqref{G(z)}.
\end{theorem}
\begin{proof}
    Let $z_0$ be a root of $g(z)=0$ in $(0,\frac{1}{3})$. Then from \eqref{G(z)}, we have $m(z_0,s,t,q)=\frac{3z_0-1}{qz_0}<0$ which is a contradiction by Lemma \ref{MZ}. Hence the result follows.
\end{proof}
\end{lemma}
Thus, it is clear that no root of $g(z)=0$ lies in the interval $(0, \frac{1}{3}]$, while at least one root must occur in $(\frac{1}{3}, 1)$. The uniqueness of this root is established only in Case~1 (see Theorem \ref{solved}). For the remaining cases, i.e., when at least one of $s,t\notin [q,\frac{1}{q}]$, this question remains unresolved. Motivated by extensive examples, the following question naturally arises.
\begin{open question}
    For $s,t>0$ and $0<q<1$ does the polynomial $g(z)$ defined in \eqref{g(z)} have a unique root in $(\frac{1}{3},1)$?
\end{open question}

\section{Formulate the numerical radius of weighted shift operator }\label{3}
In this section, we obtain the coefficients $\{c_n\}$ of entire function $F_T(z)$, as defined in \eqref{F_T(z)}, for the weighted shift operator defined in \eqref{Main_operator}. The smallest positive solution of 
$F_T(z)=0$ yields the precise numerical radius of the operator $T$. For this, consider the following weighted shift operators,
\begin{eqnarray}
    T_2 &=& T(s,q,tq^2,q^3,\ldots,sq^{2n-2},q^{2n-1},tq^{2n},q^{2n+1},\ldots) \label{T_2}\\
    T_3 &=& T(t,q,sq^2,q^3,\ldots,tq^{2n-2},q^{2n-1},sq^{2n},q^{2n+1},\ldots)\label{T_3}\\
    T_4 &=& T(1,tq,q^2,sq^3,\ldots,q^{2n-2},tq^{2n-1},q^{2n},sq^{2n+1},\ldots). \label{T_4}
\end{eqnarray}
If we take $T_2, T_3, T_4$ in place of $T$ in \eqref{F_T(z)} then $c_k$ changes to 
\begin{eqnarray}\label{d_k,h_k,e_k}
    d_k=\sum u_{i_1} ^2\cdots u_{i_k} ^2,\ h_k=\sum p_{i_1} ^2\cdots p_{i_k} ^2 \mbox{ and } e_k=\sum v_{i_1} ^2\cdots v_{i_k} ^2\ ,
\end{eqnarray} 
respectively, all the the sums being taken over $$1\leq i_1<i_2 < \cdots <i_k <\infty ,\ i_2-i_1\geq 2,\ i_3-i_2\geq 2,\ \cdots,\ i_k-i_{k-1}\geq 2,$$
where  
\begin{eqnarray*}
     u_{2k} &=& q^{2k-1} \mbox{ for } k\geq 1 \mbox{ and } u_{2k-1} = \begin{cases}
         sq^{2k-2} & \mbox{for } k=1,3,5,\ldots\\
        tq^{2k-2}&  \mbox{for } k=2,4,6,\ldots,
    \end{cases}\\
    p_{2k} &=& q^{2k-1} \mbox{ for } k\geq 1 \mbox{ and } p_{2k-1} = \begin{cases}
         tq^{2k-2}& \mbox{for } k=1,3,5,\ldots\\
        sq^{2k-2}&  \mbox{for } k=2,4,6,\ldots,
    \end{cases}\\
    v_{2k-1} &=& q^{2k-2} \mbox{ for } k\geq 1 \mbox{ and } v_{2k} = \begin{cases}
         tq^{2k-1}& \mbox{for } k=1,3,5,\ldots\\
        sq^{2k-1}&  \mbox{for } k=2,4,6,\ldots
    \end{cases} \ \ .
\end{eqnarray*}
We define 
\begin{eqnarray*}
\underbrace{(t^2\ldots t^2)(s^2\ldots s^2)}_{n \text{ terms}}=\begin{cases}
         (t^2)^\frac{n+1}{2}(s^2)^{\frac{n-1}{2}} & \mbox{for $n$ is odd }  \\
       (t^2)^\frac{n}{2}(s^2)^\frac{n}{2} &  \mbox{for $n$ is even } 
    \end{cases}
 \end{eqnarray*}   
    and
\begin{eqnarray*}
\underbrace{(s^2\ldots s^2)(t^2\ldots t^2)}_{n \text{ terms}}=\begin{cases}
         (s^2)^\frac{n+1}{2}(t^2)^{\frac{n-1}{2}} & \mbox{for $n$ is odd }  \\
       (s^2)^\frac{n}{2}(t^2)^\frac{n}{2} &  \mbox{for $n$ is even }.
    \end{cases}
 \end{eqnarray*}   
Now, for $k=1$, from \eqref{d_k,h_k,e_k} we obtain 
\begin{eqnarray*}
           d_1 &=&\sum_{k=1} ^\infty u_k ^2= s^2+q^2 +(tq^2)^2+(q^3)^2+\cdots=
           \frac{(s^2+t^2q^4)}{(1-q^8)}+ \frac{q^2}{(1-q^4)},\\
            h_1 &=&\sum_{k=1} ^\infty p_k ^2= t^2+q^2 +(sq^2)^2+(q^3)^2+\cdots= 
           \frac{(t^2+s^2q^4)}{(1-q^8)}+ \frac{q^2}{(1-q^4)},\\
           e_1 &=&\sum_{k=1} ^\infty v_k ^2= 1+t^2q^2 +(q^2)^2+(sq^3)^2+\cdots= 
           \frac{(t^2+s^2q^4)q^2}{(1-q^8)}+ \frac{1}{(1-q^4)}.           
       \end{eqnarray*}
For $n\geq1$, we have the following lemma.
\begin{lemma}\label{d_{n+1},h_{n+1},e_{n+1}}
Let the weighted shift operators $T_2, \ T_3,\ T_4$ be defined as in \eqref{T_2}, \eqref{T_3}, \eqref{T_4} respectively. Then  for $n\geq 1$, the following recurrence relations hold:
     \begin{eqnarray}
     d_{n+1}&=& \frac{q^{4n}}{1-q^{8n+8}}\big(s^2h_n+t^2q^{4n+4}d_n+q^{2n+2}c_n+q^{6n+6}e_n \big),\label{dn+1}\\
        h_{n+1} &=& \frac{q^{4n}}{1-q^{8n+8}}\big(t^2d_n+q^{2n+2}e_n+s^2q^{4n+4}h_n+q^{6n+6}c_n \big)\label{hn+1} 
        \end{eqnarray}
        and
       \begin{eqnarray} 
      e_{n+1}&=& \frac{q^{4n}}{1-q^{8n+8}}\big(c_n+q^{4n+4}e_n+t^2q^{2n+2}d_n+s^2q^{6n+6}h_n \big) \label{en+1},
     \end{eqnarray}
   where $\{d_n\},\ \{h_n\},\ \{e_n\}$ are defined in \eqref{d_k,h_k,e_k} and 
    $\{c_n\}$ be the coefficients of the entire function $F_T(z)$ defined in \eqref{F_T(z)} for the operator \eqref{Main_operator}.
\end{lemma}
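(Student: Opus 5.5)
The plan is to prove each recurrence by splitting the defining sum according to the position of the first index $i_1$. The key structural fact is that all four weight sequences are periodic up to the scaling factor $q^4$ with period $4$, and that shifting any of them by $2$ or $3$ positions gives (a scalar multiple of) another of them. Write $w_k$ for the weights of $T$ in \eqref{Main_operator}, i.e.\ $(w_k)=(1,sq,q^2,tq^3,q^4,\ldots)$. First record the shift identities, each checked directly from the explicit formulas for $w_k,u_k,p_k,v_k$:
\[
u_{k+4}=q^4u_k,\quad u_{k+2}=q^2p_k,\quad u_{k+3}=q^3w_k,\quad u_{k+5}=q^5v_k .
\]
There are analogous identities for $p$ (namely $p_{k+2}=q^2u_k$, $p_{k+3}=q^3v_k$, $p_{k+5}=q^5w_k$) and for $v$ (namely $v_{k+2}=q^2w_k$, $v_{k+3}=q^3u_k$, $v_{k+5}=q^5p_k$), and of course $w_{k+4}=q^4w_k$, etc. As a sanity check, $u=(s,q,tq^2,q^3,sq^4,\ldots)$ shifted by $3$ gives $(q^3,sq^4,q^5,tq^6,\ldots)=q^3(1,sq,q^2,tq^3,\ldots)$.

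Next, for $d_{n+1}$ I split the sum over admissible $(n+1)$-tuples into five parts: $i_1=1,2,3,4$, and $i_1\ge 5$. If $i_1=j$ with $j\le 4$, the remaining indices $i_2<\cdots<i_{n+1}$ form an arbitrary admissible $n$-tuple starting at $j+2$. Reindexing by $i_r=i'_{r-1}+j+1$ and applying the shift identity for shift $j+1$ pulls out a factor $q^{2(j+1)n}$ times the corresponding coefficient of another operator. The four terms are:
\begin{itemize}
\item[] $j=1$: $s^2\cdot q^{4n}h_n$;
\item[] $j=2$: $q^2\cdot q^{6n}c_n$;
\item[] $j=3$: $t^2q^4\cdot q^{8n}d_n$;
\item[] $j=4$: $q^6\cdot q^{10n}e_n$.
\end{itemize}
For $i_1\ge 5$, the whole tuple shifts down by $4$, which gives $q^{8(n+1)}d_{n+1}$.

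This yields the linear identity
\[
d_{n+1}=q^{4n}\bigl(s^2h_n+q^{2n+2}c_n+t^2q^{4n+4}d_n+q^{6n+6}e_n\bigr)+q^{8n+8}d_{n+1},
\]
and solving for $d_{n+1}$ gives \eqref{dn+1}. The same five-way split, applied to $p$ and $v$ with their shift identities, gives \eqref{hn+1} and \eqref{en+1}. For $e_{n+1}$, for instance, the first weight is $v_1^2=1$ and the tail starting at index $3$ is $q^2w$, which produces the term $q^{4n}c_n$.

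All terms are nonnegative and the sums converge because each operator is Hilbert--Schmidt, by the computation in Lemma \ref{HS} applied to these weights. So the rearrangements are justified, and $d_{n+1}<\infty$ makes subtracting $q^{8n+8}d_{n+1}$ legitimate.

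The main obstacle is bookkeeping rather than analysis. One must verify each of the twelve shift identities carefully, tracking the parity pattern that decides whether $s$ or $t$ appears. In particular one must confirm that the shift by $5$ maps $u$ to $v$ with the $t$ and $s$ entries in the right order, consistent with the definitions of $T_2,T_3,T_4$. Identities that fail here would show up as a wrong permutation of $c_n,d_n,h_n,e_n$ in the recurrence. I would check each identity on the first four entries, which suffices by the $q^4$-periodicity.
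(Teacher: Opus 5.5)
Your proof is correct and follows essentially the same route as the paper. Both arguments split the sum according to the first index $i_1$ and use the $q^4$-quasi-periodicity of the weights (your shift identities, all of which check out) to identify each tail with a power of $q$ times one of $c_n,d_n,h_n,e_n$. The only difference is in how the terms with large $i_1$ are handled. The paper sums the resulting geometric series over all $i_1\equiv j \pmod 4$ explicitly. You instead collect the terms with $i_1\ge 5$ into $q^{8n+8}d_{n+1}$ and solve the resulting linear equation, which is legitimate since $d_{n+1}\le d_1^{\,n+1}<\infty$.
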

\begin{proof}
   Here
   \begin{align*}
       d_{n+1}=& \ s^2 \bigg \{ \sum_{3\leq j_1,j_1+2\leq j_2,\ldots,j_{n-1}+2 \leq j_n} u_{j_1} ^2 u_{j_2} ^2\cdots u_{j_n} ^2\bigg\}\\ 
       & +q^2\bigg\{ \sum_{4\leq j_1,j_1+2\leq j_2,\ldots ,j_{n-1}+2 \leq j_n} u_{j_1} ^2 u_{j_2} ^2\cdots u_{j_n} ^2\bigg\} \\
       &+ t^2q^4\bigg\{\sum_{5\leq j_1,j_1+2\leq j_2,\ldots, j_{n-1}+2 \leq j_n} u_{j_1} ^2 u_{j_2} ^2\cdots u_{j_n} ^2 \bigg\}\\
       &+q^6\bigg\{\sum_{6\leq j_1,j_1+2\leq j_2,\ldots, j_{n-1}+2 \leq j_n} u_{j_1} ^2 u_{j_2} ^2\cdots u_{j_n} ^2 \bigg\} +\cdots 
   \end{align*}
Putting the values of $\{u_k\}$ as defined above, we get
\begin{eqnarray}
    d_{n+1}\nonumber&=&
s^2q^{4n}\bigg(\underbrace{(t^2\ldots t^2)(s^2\ldots s^2)}_{n \text{ terms}}\times 1\times q^4 q^8\ldots q^{4(n-1)}+ q^2q^6q^{10}\ldots q^{4n-2}+ \ldots \bigg) \nonumber\\ 
&&+ q^2 q^{6n}\bigg(1\times q^4q^8\ldots q^{4(n-1)}+ \underbrace{(s^2\ldots s^2)(t^2\ldots t^2)}_{n \text{ terms}} \times q^2q^6 q^{10} \ldots q^{4n-2}+\cdots \bigg)\nonumber \\
&& +t^2q^4 q^{8n}\bigg(\underbrace{(s^2\ldots s^2)(t^2\ldots t^2)}_{n \text{ terms}}\times 1\times q^4 q^8\ldots q^{4(n-1)}+ q^2q^6q^{10}\ldots q^{4n-2}+ \ldots \bigg)\nonumber\\
&&+ q^6 q^{10n}\bigg( 1\times q^4q^8\ldots q^{4(n-1)}+\underbrace{(t^2\ldots t^2)(s^2\ldots s^2)}_{n \text{ terms}} \times q^2q^6 q^{10} \ldots q^{4n-2}+\cdots \bigg)\nonumber\\
&&+\cdots\nonumber\\
&=&
s^2(q^{4n}+q^{12n+8}+\cdots)h_n + (q^{6n+2}+q^{14n+10}+\cdots)c_n \nonumber \\
&&+ t^2(q^{8n+4}+q^{16n+12}+\cdots)d_n + (q^{10n+6}+q^{18n+14}+\cdots)e_n
 \nonumber\\       
 &=&\frac{q^{4n}}{1-q^{8n+8}}\big(s^2h_n+t^2q^{4n+4}d_n+q^{2n+2}c_n+q^{6n+6}e_n \big).\nonumber
\end{eqnarray}
Again 
\begin{eqnarray}
    h_{n+1}&=&
    t^2 \bigg \{ \sum_{3\leq j_1,j_1+2\leq j_2,\ldots,j_{n-1}+2 \leq j_n} p_{j_1} ^2 p_{j_2} ^2\cdots p_{j_n} ^2\bigg\} \nonumber   \\
        &&+q^2\bigg\{ \sum_{4\leq j_1,j_1+2\leq j_2,\ldots ,j_{n-1}+2 \leq j_n} p_{j_1} ^2 p_{j_2} ^2\cdots p_{j_n} ^2\bigg\}\nonumber \\&&+ s^2q^4\bigg\{\sum_{5\leq j_1,j_1+2\leq j_2,\ldots, j_{n-1}+2 \leq j_n} p_{j_1} ^2 p_{j_2} ^2\cdots p_{j_n} ^2 \bigg\}\nonumber \\&&+q^6\bigg\{\sum_{6\leq j_1,j_1+2\leq j_2,\ldots, j_{n-1}+2 \leq j_n} p_{j_1} ^2 p_{j_2} ^2\cdots p_{j_n} ^2 \bigg\} +\cdots\nonumber
    \end{eqnarray}
    Putting the values of $\{p_k\}$ as defined above, we get
    \begin{eqnarray}
     h_{n+1}   &=&
t^2q^{4n}\bigg(\underbrace{(s^2\ldots s^2)(t^2\ldots t^2)}_{n \text{ terms}}\times 1\times q^4 q^8\ldots q^{4(n-1)}+ q^2q^6q^{10}\ldots q^{4n-2}+ \cdots \bigg)\nonumber \\
&&+ q^2 q^{6n}\bigg(1\times q^4q^8\ldots q^{4(n-1)}+\underbrace{(t^2\ldots t^2)(s^2\ldots s^2)}_{n \text{ terms}} \times q^2q^6 q^{10} \ldots q^{4n-2}+\cdots \bigg)\nonumber \\
&& +s^2q^4 q^{8n}\bigg(\underbrace{(t^2\ldots t^2)(s^2\ldots s^2)}_{n \text{ terms}}\times 1\times q^4 q^8\ldots q^{4(n-1)}+ q^2q^6q^{10}\ldots q^{4n-2}+ \cdots \bigg)\nonumber \\
&&+ q^6 q^{10n}\bigg( 1\times q^4q^8\ldots q^{4(n-1)}+\underbrace{(s^2\ldots s^2)(t^2\ldots t^2)}_{n \text{ terms}} \times q^2q^6 q^{10} \ldots q^{4n-2}+\cdots \bigg)\nonumber\\
&&+\cdots \nonumber
\end{eqnarray}
\begin{eqnarray}
&=& t^2(q^{4n}+q^{12n+8}+\cdots)d_n + (q^{6n+2}+q^{14n+10}+\cdots)e_n \nonumber \\
&& +s^2(q^{8n+4}+q^{16n+12}+\cdots)h_n + (q^{10n+6}+q^{18n+14}+\cdots)c_n\nonumber\\ 
&=&\frac{q^{4n}}{1-q^{8n+8}}\big(t^2d_n+q^{2n+2}e_n+s^2q^{4n+4}h_n+q^{6n+6}c_n \big).\nonumber
\end{eqnarray}
Also
\begin{eqnarray}
     e_{n+1}
     &=&
     \sum_{3\leq j_1,j_1+2\leq j_2,\ldots, j_{n-1}+2 \leq j_n} v_{j_1} ^2 v_{j_2} ^2\ldots v_{j_n} ^2 \nonumber   \\
        &&+t^2q^2\bigg\{ \sum_{4\leq j_1,j_1+2\leq j_2,\ldots, j_{n-1}+2 \leq j_n} v_{j_1} ^2 v_{j_2} ^2\ldots v_{j_n} ^2\bigg\}\nonumber \\&&+ q^4\bigg\{\sum_{5\leq j_1,j_1+2\leq j_2,\ldots, j_{n-1}+2 \leq j_n} v_{j_1} ^2 v_{j_2} ^2\ldots v_{j_n} ^2 \bigg\}\nonumber \\&&+s^2q^6\bigg\{\sum_{6\leq j_1,j_1+2\leq j_2,\ldots, j_{n-1}+2 \leq j_n} v_{j_1} ^2 v_{j_2} ^2\ldots v_{j_n} ^2 \bigg\} +\cdots \nonumber
        \end{eqnarray}
        Putting the values of $\{v_k\}$ as defined above, we get
        \begin{eqnarray}
        e_{n+1}&=&(q^{4n}+q^{12n+8}+\cdots)c_n+t^2(q^{6n+2}+q^{14n+10}+\cdots)d_n\nonumber \\
        &&+(q^{8n+4}+q^{16n+12}+\cdots)e_n+s^2(q^{10n+6}+q^{18n+14}+\cdots)h_n
    \nonumber\\ &=&\frac{q^{4n}}{1-q^{8n+8}}\big(c_n+q^{4n+4}e_n+t^2q^{2n+2}d_n+s^2q^{6n+6}h_n \big).\nonumber
 \end{eqnarray}
\end{proof}

\begin{theorem}\label{recurrence}
   Let $T=T(1,sq,q^2,tq^3,\ldots,q^{2n-2},sq^{2n-1},q^{2n},tq^{2n+1}, \ldots)$ be the weighted shift operator. Then the coefficients $\{c_n\}$ of the entire function $F_T(z)$ defined in \eqref{F_T(z)}, is given by 
    \begin{eqnarray*}
        c_1&=&\frac{1}{1-q^4}+ \frac{q^2(s^2+t^2q^4)}{1-q^8},\\
         c_2&=&\frac{q^{4}}{(1-q^{16})(1-q^4)}\bigg(1+q^8+q^6(s^2+t^2q^8)\\&&+\frac{t^2q^2(1+q^{12}+s^2q^2+s^2q^{10}+t^2q^{14})+s^2q^6(1+q^4+s^2q^2)}{1+q^4}\bigg)         \end{eqnarray*} and finally for $n\geq 1$, 
    \begin{eqnarray}
    c_{n+2}&=&\frac{q^{8n+4}}{1-q^{8n+16}}\bigg(t^2q^{2n+2}(1+s^2q^2)d_n+t^2q^{2n+12}d_{n+1}\\ &&\nonumber+c_n+q^4(1+q^4+s^2q^2)c_{n+1} \bigg),
\end{eqnarray}
     where the sequence $\{d_n\}$ is given in \eqref{dn+1}.
    \end{theorem}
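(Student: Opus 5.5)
The approach is the same one used for Lemma \ref{d_{n+1},h_{n+1},e_{n+1}}: a first-index decomposition of the defining sum for $c_{n+1}$, followed by elimination of the auxiliary sequences $e_n,h_n$ so that only $c$ and $d$ remain.

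\textbf{Step 1: $c_1$.} Since $c_1=\sum_k w_k^2$, it is exactly $\|T\|_{HS}^2$, which was computed in Lemma \ref{HS}. This gives the stated value of $c_1$.

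\textbf{Step 2: a recurrence for $c_{n+1}$.} The weights of $T$ are $w_1=1$, $w_2=sq$, $w_3=q^2$, $w_4=tq^3$, and the pattern has period $4$ up to a factor of $q^4$. I will split $c_{n+1}$ according to the first admissible index $i_1$. The tail of weights beginning at index $i_1+2$ is $q^{i_1+1}$ times the weight sequence of one of $T$, $T_2$, $T_3$, $T_4$. Which one depends on $i_1 \bmod 4$, after also tracking the alternation $s\leftrightarrow t$ that happens every four steps. Summing the resulting geometric series in $q^{8n+8}$, exactly as in the proof of Lemma \ref{d_{n+1},h_{n+1},e_{n+1}}, I expect an identity of the form
\begin{equation*}
c_{n+1}=\frac{q^{4n}}{1-q^{8n+8}}\big(e_n+s^2q^{2n+2}d_n+q^{4n+4}c_n+t^2q^{6n+6}h_n\big).
\end{equation*}
The placement of $s^2$ versus $t^2$, and of $h_n$ versus $d_n$, must be checked carefully against the definitions of $u_k,p_k,v_k$.

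\textbf{Step 3: $c_2$.} Setting $n=1$ in this identity and inserting the explicit values of $d_1,h_1,e_1$ computed before Lemma \ref{d_{n+1},h_{n+1},e_{n+1}} should give $c_2$. The common denominator $(1-q^{16})(1-q^4)(1+q^4)$ then simplifies to the displayed form.

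\textbf{Step 4: the two-step recurrence.} I will write the Step 2 identity at level $n+1$, which expresses $c_{n+2}$ through $e_{n+1},d_{n+1},c_{n+1},h_{n+1}$. Next I substitute \eqref{en+1} for $e_{n+1}$, and \eqref{hn+1} where needed, so that everything is expressed at level $n$. The remaining level-$n$ quantities $e_n$ and $h_n$ are then eliminated using the Step 2 identity at level $n$, solved for the combination $e_n+t^2q^{6n+6}h_n$ in terms of $c_{n+1}$, $c_n$ and $d_n$, together with \eqref{dn+1}, which links $h_n$ and $e_n$ to $d_{n+1}$. After this only $c_n,c_{n+1},d_n,d_{n+1}$ should survive. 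The prefactor should collapse to $q^{8n+4}/(1-q^{8n+16})$, with the coefficients $t^2q^{2n+2}(1+s^2q^2)$, $t^2q^{2n+12}$, $1$ and $q^4(1+q^4+s^2q^2)$.

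\textbf{Main obstacle.} The main obstacle is the elimination in Step 4. It is not a priori clear that $e_n$ and $h_n$ cancel exactly, so the first thing to do is identify which linear combination of the three recurrences in Lemma \ref{d_{n+1},h_{n+1},e_{n+1}} and the Step 2 identity kills both of them. I would do this by treating $(c,d,h,e)$ as a vector, writing the level-$(n+1)$ relations as a $4\times4$ linear system, and solving for the two unwanted unknowns. Any mismatch in the $s/t$ bookkeeping from Step 2 will show up here as a failure of cancellation.
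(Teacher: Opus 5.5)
Your plan is the paper's (first-index decomposition for $c_{n+1}$, then $n=1$ for $c_2$, then substitution of \eqref{en+1} and \eqref{hn+1} into the level-$(n+1)$ identity). However, the identity you display in Step 2 is false: $d_n$ and $h_n$ are interchanged. If $i_1=2$ (factor $w_2^2=s^2q^2$), the admissible tail begins at $w_4=tq^3$ and equals $q^3(t,q,sq^2,q^3,tq^4,\ldots)$, i.e.\ $q^3$ times the weights of $T_3$, so it contributes $s^2q^{6n+2}h_n$. If $i_1=4$ (factor $w_4^2=t^2q^6$), the tail begins at $w_6=sq^5$ and is $q^5$ times the weights of $T_2$, so it contributes $t^2q^{10n+6}d_n$. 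Since $w_{k+4}=q^4w_k$, the correct identity is
\[
c_{n+1}=\frac{q^{4n}}{1-q^{8n+8}}\big(e_n+s^2q^{2n+2}h_n+q^{4n+4}c_n+t^2q^{6n+6}d_n\big).
\]
This matters. With your version, Step 3 produces a $c_2$ containing the monomial $s^4q^8$ (from $s^2q^4d_1$), whereas the smallest product of two squared $s$-weights is $w_2^2w_6^2=s^4q^{12}$. In Step 4, the combination $e_n+t^2q^{6n+6}h_n$ you intend to solve for is not the one that arises, so combining it with \eqref{dn+1} would only yield a wrong closed formula.

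With the corrected identity your ``main obstacle'' disappears. After substituting \eqref{en+1} and \eqref{hn+1}, the terms in $e_n,h_n$ come only from $e_{n+1}$ and $s^2q^{2n+4}h_{n+1}$, and they add up to
\[
\frac{q^{8n+4}(1+s^2q^2)}{1-q^{8n+8}}\big(e_n+s^2q^{2n+2}h_n\big),
\]
which is precisely the combination in the corrected identity at level $n$. Replacing it leaves only $c_n$, $d_n$, $c_{n+1}$ and the term $t^2q^{6n+12}d_{n+1}$, which you simply keep. The coefficients of $c_n$ and $d_n$ become multiples of $1-q^{8n+8}$, cancelling that denominator and giving the stated formula, so \eqref{dn+1} is never needed. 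The paper performs the same elimination sequence by sequence: it records $h_{n+1}=t^2q^{4n}d_n+q^{2n+2}c_{n+1}$ and $e_{n+1}=q^{4n}(c_n+t^2q^{2n+2}d_n)+q^{4n+4}c_{n+1}$ before substituting.
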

    \begin{proof}
        Let the sequence $\{w_n\}$ denote the weight of the given weighted shift operator $T$. Then
        \begin{eqnarray*}
        w_{2k-1} &=& q^{2k-2} \mbox{ for } k\geq 1 \mbox{ and } w_{2k} = \begin{cases}
         sq^{2k-1}& \mbox{for } k=1,3,5,\ldots\\
        tq^{2k-1}&  \mbox{for } k=2,4,6,\ldots
    \end{cases}
        \end{eqnarray*}
        Then we have 
        \begin{eqnarray*}
        c_1=\sum_{k=1} ^\infty w_k ^2 = \frac{1}{1-q^4}+ \frac{q^2(s^2+t^2q^4)}{1-q^8} \mbox{ by Lemma \ref{HS}.}
        \end{eqnarray*}
        Now for $n\geq 1$, 
        \begin{eqnarray}
        c_{n+1}
        &=& \sum_{3\leq j_1,j_1+2\leq j_2,\ldots,j_{n-1}+2 \leq j_n} w_{j_1} ^2 w_{j_2} ^2\ldots w_{j_n} ^2 \nonumber   \\
        &&+s^2q^2\bigg\{ \sum_{4\leq j_1,j_1+2\leq j_2,\ldots, j_{n-1}+2 \leq j_n} w_{j_1} ^2 w_{j_2} ^2\ldots w_{j_n} ^2\bigg\}\nonumber \\&&+ q^4\bigg\{\sum_{5\leq j_1,j_1+2\leq j_2,\ldots, j_{n-1}+2 \leq j_n} w_{j_1} ^2 w_{j_2} ^2\ldots w_{j_n} ^2 \bigg\}\nonumber \\&&+t^2q^6\bigg\{\sum_{6\leq j_1,j_1+2\leq j_2,\ldots, j_{n-1}+2 \leq j_n} w_{j_1} ^2 w_{j_2} ^2\ldots w_{j_n} ^2 \bigg\} +\cdots \nonumber
        \end{eqnarray}
    Putting the values of $\{w_n\}$, we get
        \begin{eqnarray}
        c_{n+1} &=&
q^{4n}\bigg(1\times q^4 q^8\ldots q^{4(n-1)}+ \underbrace{(s^2\ldots s^2)(t^2\ldots t^2)}_{n \text{ terms}} q^2q^6q^{10}\ldots q^{4n-2}+ \cdots \bigg)\nonumber \\&&+ s^2q^2 q^{6n}\bigg(\underbrace{(t^2\ldots t^2)(s^2\ldots s^2)}_{n \text{ terms}} \times 1\times q^4q^8\ldots q^{4(n-1)}\nonumber \\&&+t^2 \times 1 \times q^6 q^{10} \ldots q^{4n-2}+\cdots \bigg)\nonumber \\&& +q^4 q^{8n}\bigg(1\times q^4 q^8\ldots q^{4(n-1)}+ \underbrace{(t^2\ldots t^2)(s^2\ldots s^2)}_{n \text{ terms}} q^6q^{10}\ldots q^{4n-2}+ \ldots \bigg)\nonumber \\&& +t^2q^6 q^{10n}\bigg(\underbrace{(s^2\ldots s^2)(t^2\ldots t^2)}_{n \text{ terms}} \times 1\times q^4q^8\ldots q^{4(n-1)}\nonumber \\&&+s^2 \times 1 \times q^6 q^{10} \ldots q^{4n-2}+\cdots \bigg)+\cdots 
\nonumber\\
        &=&
        (q^{4n}+q^{12n+8}+\cdots) e_n + s^2(q^{6n+2}+q^{14n+10}+\cdots) h_n \nonumber \\
        &&+ (q^{8n+4}+q^{16n+12}+\cdots) c_n + t^2 (q^{10n+6}+q^{18n+14}+ \cdots) d_n
        \nonumber\\
        &=& 
        \frac{q^{4n}}{1-q^{8n+8}}\big(e_n+q^{4n+4}c_n+s^2q^{2n+2}h_n+t^2q^{6n+6}d_n \big)\label{Cn+1}
         \end{eqnarray} 
where $\{e_n\}$ and $\{h_n\}$ are defined in \eqref{en+1} and  \eqref{hn+1} respectively. Replacing $n$ by $n+1$, we get
         \begin{eqnarray} \label{cn+2}
             c_{n+2}&=&\frac{q^{4n+4}}{1-q^{8n+16}}\big(e_{n+1}+q^{4n+8}c_{n+1}+s^2q^{2n+4}h_{n+1}\nonumber\\&&+t^2q^{6n+12}d_{n+1} \big).
         \end{eqnarray}
Now we have,
\begin{eqnarray*}
    d_{n+1} &=&-q^{2n-2}e_n+q^{-2n-2}c_{n+1}, \,\,\,\,\, \mbox{[by \eqref{dn+1} and \eqref{Cn+1}]}\\
     h_{n+1} &=& q^{4n}(t^2d_n+q^{2-2n}c_{n+1}) \,\,\,\,\, \mbox{[by \eqref{hn+1} and \eqref{Cn+1}]} \,\,\,\\
 \mbox{and} \,\,\,\,\,\,\,  e_{n+1} &=& q^{4n}(t^2q^{2n+2}d_n+q^4c_{n+1}+c_n) \,\,\,\,\, \mbox{[by \eqref{en+1} and \eqref{Cn+1}]}.
\end{eqnarray*}
Hence finally putting the values of $h_{n+1}$ and $e_{n+1}$ in \eqref{cn+2},
\begin{eqnarray*}
    c_{n+2}&=&\frac{q^{8n+4}}{1-q^{8n+16}}\bigg(t^2q^{2n+2}(1+s^2q^2)d_n+t^2q^{2n+12}d_{n+1}\\ &&+c_n+q^4(1+q^4+s^2q^2)c_{n+1} \bigg).
\end{eqnarray*}
 \end{proof}
 \begin{remark}
     Putting $s=t$ in Theorem \ref{recurrence}, we get
     \begin{eqnarray*}
        c_1 &=& \frac{1+s^2q^2}{1-q^4},\\
             c_2 &=& \frac{q^{4}}{(1-q^{16})(1-q^4)}\bigg(1+q^8+q^6(s^2+s^2q^8)\\&&+\frac{s^2q^2(1+q^{12}+s^2q^2+s^2q^{10}+s^2q^{14})+s^2q^6(1+q^4+s^2q^2)}{1+q^4}\bigg)\\ 
             &=& \frac{q^{4}}{(1-q^{16})(1-q^4)}\bigg((1+q^8)(1+s^2q^6)\\&&+\frac{s^2q^2(1+q^{12}+s^2q^2+s^2q^{10}+s^2q^{14}+q^4+q^8+s^2q^6)}{1+q^4}\bigg)\\
             &=& \frac{q^{4}}{(1-q^{16})(1-q^4)}\bigg((1+q^8)(1+s^2q^6)\\&&+\frac{s^2q^2(1+q^8)(1+q^4+s^2q^2+s^2q^6)}{1+q^4}\bigg)\\
             &=& \frac{q^{4}}{(1-q^8)(1-q^4)}\bigg((1+s^2q^6)+\frac{s^2q^2(1+q^4+s^2q^2(1+q^4))}{1+q^4}\bigg)
             \\&=&\frac{q^4(1+s^2q^2+s^4q^4+s^2q^6)}{(1-q^8)(1-q^4)}
        \end{eqnarray*} and 
        \begin{eqnarray}\label{Cnn}
        c_{n+2} &=&\nonumber\frac{q^{8n+4}}{1-q^{8n+16}}\bigg(s^2q^{2n+2}(1+s^2q^2)d_n+s^2q^{2n+12}d_{n+1}\nonumber\\ 
        &&+c_n+q^4(1+q^4+s^2q^2)c_{n+1} \bigg).
        \end{eqnarray}
        For \(s=t\), the operators \(T_2\) and \(T_3\) defined in \eqref{T_2} and \eqref{T_3} coincide; likewise, the operators \(T_4\) and \(T\) defined in \eqref{T_4} and \eqref{Main_operator} are identical. Hence from \eqref{d_k,h_k,e_k}, we have
        \begin{eqnarray*}
            d_n=h_n,\   c_n=e_n 
        \end{eqnarray*}
        whenever $s=t$.
  Therefore from \eqref{dn+1} and \eqref{en+1}, we have \begin{eqnarray}\label{c_n,d_n}
            d_{n+1} &=& \frac{q^{4n}}{1-q^{4n+4}}(s^2d_n+q^{2n+2}c_n)\label{s=t1}\\
            c_{n+1} &=& \frac{q^{4n}}{1-q^{4n+4}}(c_n+s^2q^{2n+2}d_n)\label{s=t}
        \end{eqnarray}
for all $n\geq 1$. It follows from \eqref{s=t} that
\begin{eqnarray*}
    d_n &=&\frac{1}{s^2q^{2n+2}}\Big(\frac{1-q^{4n+4}}{q^{4n}}c_{n+1}-c_n\Big)
    \end{eqnarray*}
and hence from \eqref{s=t1}, we obtain
   \begin{eqnarray*} 
   d_{n+1} &=&
   \frac{q^{4n}}{1-q^{4n+4}}\bigg(\frac{1}{q^{2n+2}}\left(\frac{1-q^{4n+4}}{q^{4n}}c_{n+1}-c_n\right)+q^{2n+2}c_n\bigg)
   \\&=&
   q^{4n}\Big(\frac{1}{q^{6n+2}}c_{n+1}-\frac{1}{q^{2n+2}}c_n\Big).
\end{eqnarray*}
Substituting the values of $d_n$ and $d_{n+1}$ in \eqref{Cnn} we have, 
         \begin{eqnarray*}
            c_{n+2}&=&\frac{q^{8n+4}}{1-q^{8n+16}}\bigg((1+s^2q^2)\left(\frac{1-q^{4n+4}}{q^{4n}}c_{n+1}-c_n\right)\\&&+s^2q^{6n+12}\left(\frac{1}{q^{6n+2}}c_{n+1}-\frac{1}{q^{2n+2}}c_n\right)+c_n\\ 
        &&+q^4(1+q^4+s^2q^2)c_{n+1} \bigg)  \\
&=&\frac{q^{8n+4}}{1-q^{4n+8}}\bigg(-s^2q^2c_n+\frac{1+s^2q^2}{q^{4n}}c_{n+1}\bigg)
           \\&=&\frac{q^{4n+4}(1+s^2q^2)}{1-q^{4n+8}}c_{n+1}-\frac{s^2q^{8n+6}}{1-q^{4n+8}}c_n.
        \end{eqnarray*}
       The above values for $c_1, c_2$ and $c_{n+2}$ coincide with the expressions previously obtained in Theorem 3.1 of \cite{chakraborty2025numerical}.
 \end{remark}
From Stout\cite{stout1983numerical}, we have $w(T(w_1,w_2,\ldots ,w_{n-1}))$ converges to \\
$w(T(w_1,w_2,\ldots))$ as $n \rightarrow{\infty}$, where the weights $(w_1,w_2,\ldots)$ are positive and square summable. Here we have the following MATLAB program similar to  \cite{chakraborty2025numerical} which gives the numerical radii of weighted shift matrices with weights

$(1,sq,q^2,tq^3,\ldots,q^{4n})$, 
$(1,sq,q^2,tq^3,\ldots,sq^{4n+1})$,
$(1,sq,q^2,tq^3,\ldots,q^{4n+2})$  and 
$(1,sq,q^2,tq^3,\ldots,tq^{4n+3})$   where $s,t >0$ and $0<q<1$ and $n\geq 1$.\\
\begin{program}
\begin{lstlisting}[style=Matlab-editor]

format long;
input s;
input q;
input t;
input N(N>=6); 
lambda_max= zeros(1,N);
for n=6:N
w = zeros(1, n);
for k = 1:n-1
    if mod(k,4)==1
        w(k) = q^(k-1);
    elseif mod(k,4)==2     
        w(k) = s*q^(k-1);
    elseif mod(k,4)==3      
        w(k) = q^(k-1);
    elseif mod(k,4)==0      
        w(k) = t*q^(k-1);
    end
end
A = zeros(n);
for i = 1:n-1
    A(i+1,i) = w(i);
end
H = (A + A')/2;
lambda_max(n) = max(eig(H));
end
disp('The weighted shift matrix A =');
disp(A);
fprintf ('w(A) = %.16f', lambda_max(n));
figure;
stem(6:N,lambda_max(6:N),'LineWidth', 2);
xlabel('n');
ylabel('max eigenvalue of H');
grid on;
\end{lstlisting}\label{program1}
\end{program}
For a weighted shift operator $T=T(w_1,w_2,\ldots)$ with square summable positive weights $(w_1,w_2,\ldots)$, Stout\cite{stout1983numerical} has proved that  $w(T)=1/\lambda$, $\lambda$ being the smallest positive root of $F_T(z)=0$. We consider the partial sum of $F_T(z)$ as 
    \begin{eqnarray}\label{F_n}
        F_n(z)=1+\sum_{k=1} ^n \bigg(\frac{-1}{4}\bigg)^kc_kz^{2k}
    \end{eqnarray}
    for all $n\geq 1$. We know that $F_n(z)$ converges to $F_T(z)$ uniformly on compact sets. Moreover, as shown in \cite{chakraborty2025numerical}, there exists a subsequence $\{z_{n_{k}}\}$ of zeros of $F_{n_{k}}(z)=0$ which converges to $\lambda$.

Now for the weighted shift operator $T=T(1,sq,q^2,tq^3,\ldots )$ with $s,t>0, 0<q<1$ we have the following MATLAB program similar to \cite{chakraborty2025numerical} to find the roots of the sequence of polynomials (\ref{F_n}).\\

\begin{program}
\begin{lstlisting}[style=Matlab-editor]

format long;
digits(100);
q = input;
t = input;
s = input;
m = input;
c = sym(zeros(1, m+2));
d = sym(zeros(1, m+1));
e = sym(zeros(1, m+1));
c(1) = vpa((1/(1-q^4))+q^2*(s^2 + t^2*q^4)/(1-q^8));
d(1) = vpa((s^2 + t^2*q^4)/(1-q^8)+q^2/(1-q^4));
e(1) = vpa((1/(1-q^4))+q^2*(t^2 + s^2*q^4)/(1-q^8));
c(2) = vpa(q^4/((1-q^16)*(1-q^4))*(1+q^8+q^6*(s^2+t^2*q^8)+ 
(t^2*q^2*(1+q^12+s^2*q^2+ s^2*q^10+t^2*q^14)+s^2*q^6*(1+q^4+s^2*q^2))/(1+q^4)));
for i = 1:m
e(i+1) = vpa(q^(4*i)*(t^2*q^(2*i+2)*d(i)+q^4*c(i+1)+c(i)));
d(i+1) = vpa(-q^(2*i-2)*e(i)+q^(-2*i-2)*c(i+1));
c(i+2) = vpa((q^(8*i+4)/(1-q^(8*i+16)))*(t^2*q^(2*i+2)*(1+s^2*q^2)*d(i)+(t^2*q^(2*i+12)*d(i+1)+c(i)+q^4*(1+q^4+s^2*q^2)*c(i+1))));
end
syms z;
F2 = sym(1);
for k = 1:m
F2=F2+vpa((-1/4)^k*c(k))*z^(2*k);
end
F2 = expand(F2);
disp('polynomial F2(z):');
pretty(F2);
sol=vpa(solve(F2 == 0, z), 80);
disp('roots of F2:');
disp(sol);
    \end{lstlisting}\label{program2}
\end{program}
\begin{example}
    Let $T=T(1,sq,q^2,tq^3,\ldots)$ be the weighted shift operator with $q=0.08$ and $s=0.1$ and $t=0.01$. From Theorem \ref{ag}, the upper bound and lower bound of numerical radius are
\begin{align*}
    \frac{1}{2}\max\{1,sq,tq^3\}+\frac{\sqrt{sq}}{2}\max\{1,\sqrt{\frac{t}{s}}q^2\} =0.54472\,\,
    \mbox{and }\\ \ \sup\limits_{0<z<1} \displaystyle\frac{(1-z)\sqrt z}{(1-q^4z^4)} \big((1+sqz)+q^2z^2(1+tqz)\big),\,\, \mbox{ which is  approximately $0.38454519$},
\end{align*}
respectively.\\
Also from \cite{stout1983numerical}, we already know that the numerical radius of the weighted shift matrix $T_n$ defined as
\begin{center}
$T_n=$
$\begin{cases}
T(1,sq,q^2,tq^3,\ldots ,q^{n-1})& \mbox{ if $n=4k+1$,}\\
T(1,sq,q^2,tq^3,\ldots ,sq^{n-1})& \mbox{ if $n=4k+2$,}\\
T(1,sq,q^2,tq^3,\ldots ,q^{n-1})& \mbox{ if $n=4k+3$ and}\\
T(1,sq,q^2,tq^3,\ldots ,tq^{n-1})& \mbox{ if $n=4k$},\,\,\,\,\, \mbox{where $k \in \mathbb{N}$}
\end{cases}$
\end{center}
converges to $w(T)$ as $n\to\infty$ where $T$ is defined in \eqref{Main_operator}. The numerical computations by Program \ref{program1} indicate that the largest eigenvalue of $\mbox{Re}(T_{n})$ converges toward a limiting value  $w(T)$ which is approximately $0.500016$. This empirical behaviour is clearly reflected as shown in Figure~\ref{fig:mesh4}.\\ 
    Also from Remark 3.3 of \cite{chakraborty2025numerical}, we know that there exists a subsequence $\{z_{n_k}\}$ of zeros of $F_{n_k}(z)$ which converges to $\lambda=\frac{1}{w(T)}$. Here, the approximate minimal positive roots of $F_1(z)=0$, $F_2(z)=0$, $F_3(z)=0$ are $1.999895046$, $1.99993600045$, $1.99993600045$ respectively. The numerical computations by Program \ref{program2} show that, for large $n$, the minimal positive root of $F_n(z)=0$ appears to converge to approximately \(1.99993600045\), whose reciprocal is \(0.500016\), in agreement with the data presented in Figure~\ref{fig:mesh4}. Consequently, we may regard \(w(T)\) as being approximately \(0.500016\).

\begin{figure}[h]
    \centering
\includegraphics[width=0.52\textwidth]{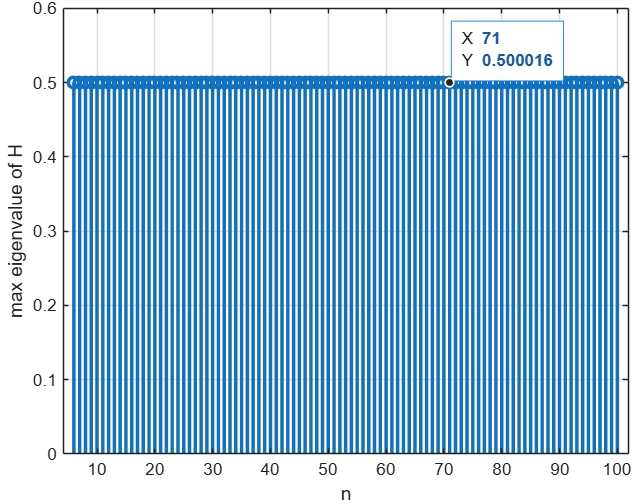}
    \caption{}
    \label{fig:mesh4}
\end{figure}

\end{example}
\begin{note}
 All numerical calculations have been done in this article using  Matlab.
\end{note}
\section{Declarations}
\textit{Acknowledgement:} Mr. Arobinda Ghosh would like to thank UGC, Govt. of India for the financial support (NTA Ref. No. 211610122278) in the form of fellowship.\\
\textit{Author Contributions:} All the authors contributed equally to this manuscript and reviewed it. \\
 \textit{Data Availability :} No datasets were generated or analysed during the current study. \\
\textit{Conflict of interest:} The authors declare no conflict of interest.\\
\textit{Competing interest:} The authors declare no competing interests.
\bibliographystyle{amsplain}

 \end{document}